\newcommand{\bb}{{\mathcal B}}
\newtheorem{thm}{Theorem}
\newtheorem*{thm*}{Theorem}
\newtheorem*{cor*}{Corollary}
\newtheorem{lem}[thm]{Lemma}
\date{}
\newtheorem{prop}[thm]{Proposition}
\DeclareMathOperator{\aaa}{\mathcal A}
\DeclareMathOperator{\col}{\underset{c}\prec}
\title{A size-sensitive inequality for cross-intersecting families}
\author{Peter Frankl\footnote{R\'enyi Institute, Budapest;  Email: {\tt peter.frankl@gmail.com}}, Andrey Kupavskii\footnote{Moscow Institute of Physics and Technology, G-SCOP, CNRS; Email: {\tt kupavskii@yandex.ru} \ \ Research supported in part by the Swiss National Science Foundation Grants 200021-137574 and 200020-14453 and by the grant N 15-01-03530 of the Russian Foundation for Basic Research.}}
\date{}
\begin{document}
\maketitle
\begin{abstract} Two families $\mathcal A$ and $\mathcal B$ of $k$-subsets of an $n$-set are called cross-intersecting if $A\cap B\ne\emptyset$ for all $A\in \mathcal A, B\in \bb $. Strengthening the classical Erd\H os-Ko-Rado theorem, Pyber proved that $|\aaa||\bb|\le {n-1\choose k-1}^2$ holds for $n\ge 2k$. In the present paper we sharpen this inequality. We prove that assuming $|\bb|\ge {n-1\choose k-1}+{n-i\choose k-i+1}$ for some $3\le i\le k+1$ the stronger inequality $$|\aaa||\bb|\le \Bigl({n-1\choose k-1}+{n-i\choose k-i+1}\Bigr)\Bigl({n-1\choose k-1}-{n-i\choose k-1}\Bigr)$$ holds. These inequalities are best possible. We also present a new short proof of Pyber's inequality and a short computation-free proof of an inequality due to Frankl and Tokushige.
\end{abstract}

\section{Introduction}
Let $[n] = \{1,\ldots, n\}$ and ${[n]\choose k}$ be the family of all $k$-subsets of $[n]$ for $n\ge k\ge 0$. For a family $\mathcal F\subset {[n]\choose k}$ let $\mathcal F^c$ be the family of complements, i.e., $\mathcal F^c = \{[n]-F:F\in \mathcal F\}$. Obviously, $\mathcal F^c\subset {[n]\choose n-k}$ holds.

Two families $\aaa,\bb\subset {[n]\choose k}$ are said to be \textit{cross-intersecting} if $A\cap B\ne \emptyset$ holds for all $A\in \aaa$, $B \in \bb$. To avoid trivialities we assume $n\ge 2k.$ Analogously, $\mathcal C, \mathcal D\subset {[n]\choose l}$ are called \textit{cross-union} if $C\cup D\ne[n]$ holds for all $C\in \mathcal C, D\in \mathcal D$. Here we assume $n\le 2l$ in general. Note that $\aaa,\bb $ are cross-intersecting iff $\mathcal A^c,\bb^c$ are cross-union.

In order to state one of the most fundamental theorems in \textit{extremal set theory}, let us say that $\mathcal F\subset {[n]\choose k}$ is \textit{intersecting} if $F\cap F'\ne \emptyset $ for all $F,F' \in \mathcal F.$

\begin{thm*}[Erd\H os-Ko-Rado Theorem \cite{EKR}] If $\mathcal F\subset {[n]\choose k} $ is intersecting and $n\ge 2k >0$ then

\begin{equation}\label{eq1} |\mathcal F|\le {n-1\choose k-1} \ \ \ \ \text{holds.}
\end{equation}
\end{thm*}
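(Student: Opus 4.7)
The plan is to use Katona's cyclic permutation argument, which yields the bound by a one-line double count once a small combinatorial lemma is in place.

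First I would set up the machinery. Consider the $(n-1)!$ cyclic arrangements of $[n]$ (placements on an oriented $n$-cycle, identified up to rotation), and call a set of $k$ consecutive positions in such an arrangement an \emph{arc of length $k$}. Each cyclic arrangement contains exactly $n$ arcs of length $k$, and conversely each fixed $F\in \binom{[n]}{k}$ appears as an arc in exactly $k!(n-k)!$ cyclic arrangements, since after committing to the block of positions occupied by $F$ one may permute $F$ inside this block and $[n]\setminus F$ outside it independently.

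The core step is the following lemma: in every cyclic arrangement, at most $k$ of the $n$ arcs of length $k$ belong to the intersecting family $\mathcal F$. To prove this, assume some arc $A$ of length $k$ lies in $\mathcal F$. Any other arc $B$ of length $k$ meeting $A$ must start at one of the $k-1$ positions strictly inside $A$ (other than its first), or at one of the $k-1$ positions outside $A$ from which $B$ wraps around and overlaps $A$ from the other side; this gives $2(k-1)$ candidate arcs besides $A$. I would then pair these $2(k-1)$ candidates into $k-1$ pairs so that the two arcs in each pair are disjoint on the cycle -- and this is precisely where the hypothesis $n \ge 2k$ enters, since enough room on the circle is needed for such a disjoint pairing to exist. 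Each pair contributes at most one arc to $\mathcal F$ by the intersecting property, and adding $A$ itself yields the bound $k$.

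Finally I would double count the pairs $(\pi,F)$ where $\pi$ is a cyclic arrangement and $F \in \mathcal F$ is an arc of $\pi$. The lemma bounds this count from above by $k\cdot (n-1)!$, while summing the other way gives exactly $|\mathcal F|\cdot k!(n-k)!$. Combining,
$$|\mathcal F|\ \le\ \frac{k\,(n-1)!}{k!(n-k)!}\ =\ \binom{n-1}{k-1}.$$
The only step requiring actual thought is the lemma, and specifically the pairing that exploits $n\ge 2k$; the rest is bookkeeping.
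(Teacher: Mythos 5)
Your proof is correct: this is Katona's cyclic permutation argument, and all the pieces are in order --- the count of $(n-1)!$ cyclic arrangements, the fact that each $F$ occurs as an arc in exactly $k!(n-k)!$ of them, the lemma that at most $k$ of the $n$ arcs of a fixed arrangement can lie in $\mathcal F$ (with the $2(k-1)$ candidates correctly paired into $k-1$ disjoint pairs, which is exactly where $n\ge 2k$ is used), and the final double count giving $|\mathcal F|\le k(n-1)!/(k!(n-k)!)={n-1\choose k-1}$. However, this is not the route the paper takes. The paper does not prove the Erd\H os--Ko--Rado theorem directly at all: it observes that an intersecting $\mathcal F$ yields the cross-intersecting pair $\aaa=\bb=\mathcal F$, so \eqref{eq1} follows from Pyber's inequality \eqref{eq2}, and it is \eqref{eq2} that the paper proves --- by reducing to lexicographic initial segments via Hilton's Lemma (itself resting on the Kruskal--Katona theorem), showing $|\aaa|+|\bb|\le 2{n-1\choose k-1}$ through a neighborhood argument in a regular bipartite graph, and finishing with the AM--GM inequality. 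The comparison is instructive: your argument is entirely self-contained and elementary, needing no compression or shadow machinery, but it proves only the one-family bound; the paper's route costs more (Kruskal--Katona as a black box) but delivers the strictly stronger two-family statement \eqref{eq2}, which is the object the rest of the paper sharpens. Both are standard and both are valid proofs of \eqref{eq1}.
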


Hilton and Milner \cite{HM} proved in a stronger form that for $n>2k$ the only way to achieve equality in (\ref{eq1}) is to take all $k$-subsets containing some fixed element of $[n]$.

If $\mathcal F$ is intersecting then $\aaa = \mathcal F$, $\bb = \mathcal F$ are cross-intersecting. Therefore the following result is a strengthening of (\ref{eq1}).

\begin{thm*}[Pyber's inequality \cite{P}] Suppose that $\aaa, \bb\subset {[n]\choose k}$ are cross-intersecting and $n\ge 2k$. Then
\begin{equation}\label{eq2} |\mathcal A||\bb|\le {n-1\choose k-1}^2 \ \ \ \ \text{holds.}
\end{equation}
\end{thm*}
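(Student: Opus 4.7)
The plan is to prove Pyber's inequality via the shifting technique combined with induction on $n$. Recall that the $(i,j)$-shift with $i<j$ preserves the cardinality of a family, and applied simultaneously to $\aaa$ and $\bb$ preserves cross-intersection; iterating, we may reduce to the case where both families are shifted. The base case $n=2k$ is then immediate: in $\binom{[2k]}{k}$ two $k$-sets are disjoint iff they are complementary, so $\bb$ is disjoint from $\{[2k]\setminus A:A\in\aaa\}$ inside $\binom{[2k]}{k}$, giving $|\aaa|+|\bb|\le\binom{2k}{k}$, and AM--GM yields $|\aaa||\bb|\le(\tfrac12\binom{2k}{k})^2=\binom{2k-1}{k-1}^2$.

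For the inductive step $n>2k$, I would partition each family by the element $n$: write $\aaa=\aaa_0\sqcup\aaa_n$ with $\aaa_n=\{A\in\aaa:n\in A\}$ and $\widetilde{\aaa_n}:=\{A\setminus\{n\}:A\in\aaa_n\}\subset\binom{[n-1]}{k-1}$, and define $\bb_0,\widetilde{\bb_n}$ analogously. The pair $(\aaa_0,\bb_0)$ is cross-intersecting in $\binom{[n-1]}{k}$, so the induction hypothesis gives $|\aaa_0||\bb_0|\le\binom{n-2}{k-1}^2$. The mixed-uniformity pairs $(\widetilde{\aaa_n},\bb_0)$ and $(\aaa_0,\widetilde{\bb_n})$ are cross-intersecting of types $(k-1,k)$ on $[n-1]$ and admit the cross-uniform Pyber bound $\binom{n-2}{k-2}\binom{n-2}{k-1}$, which is proved by the same shifting/induction framework carried on the pair of parameters $(n,k)$. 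Finally, shiftedness forces $\aaa_0$ to contain the upper shadow of $\widetilde{\aaa_n}$ in $[n-1]$, so $|\widetilde{\aaa_n}|\le\tfrac{k}{n-k}|\aaa_0|$ by an elementary double-count, and symmetrically for $\bb$; this controls the residual term $|\widetilde{\aaa_n}||\widetilde{\bb_n}|$ even in the absence of a cross-intersection constraint. Expanding $|\aaa||\bb|=(|\aaa_0|+|\widetilde{\aaa_n}|)(|\bb_0|+|\widetilde{\bb_n}|)$ and combining these four estimates with Pascal's identity $\binom{n-1}{k-1}=\binom{n-2}{k-1}+\binom{n-2}{k-2}$ should yield the required bound $\binom{n-1}{k-1}^2$.

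The main obstacle I anticipate is balancing the four pieces: naively summing the four bounds slightly overshoots $\binom{n-1}{k-1}^2$, and closing the gap requires using that the cross-uniform, same-uniformity and shadow estimates cannot all be tight simultaneously. A natural way to organise this is a weighted AM--GM argument packaging the shadow inequalities $|\aaa|\le\tfrac{n}{n-k}|\aaa_0|$, $|\bb|\le\tfrac{n}{n-k}|\bb_0|$ with the inductive cross-intersecting bound. An alternative route that bypasses this induction entirely is the Hoffman ratio bound applied to the vertex-transitive Kneser graph $K(n,k)$, which for cross-independent vertex sets $S,T$ is known to yield $\sqrt{|S||T|}\le\alpha(K(n,k))=\binom{n-1}{k-1}$.
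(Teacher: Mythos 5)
Your overall strategy (simultaneous shifting plus induction on $n$, with the base case $n=2k$ handled by complementation and AM--GM) is a legitimate and genuinely different route from the paper's, but as written it has a real gap, and you have located it yourself: the term $|\widetilde{\mathcal A_n}||\widetilde{\mathcal B_n}|$ is not controlled. The shadow estimate $|\widetilde{\mathcal A_n}|\le\frac{k}{n-k}|\mathcal A_0|$ is correct for shifted families, but it provably overshoots: when $|\mathcal A_0|$ is as large as $\binom{n-2}{k-1}$ it only gives $|\widetilde{\mathcal A_n}|\le\frac{k}{k-1}\binom{n-2}{k-2}$, so the four bounds sum to $\binom{n-1}{k-1}^2+\bigl(\frac{k^2}{(k-1)^2}-1\bigr)\binom{n-2}{k-2}^2$; likewise the packaged form $|\mathcal A|\le\frac{n}{n-k}|\mathcal A_0|$ yields $\frac{n^2}{(n-k)^2}\binom{n-2}{k-1}^2$, which exceeds the target $\frac{(n-1)^2}{(n-k)^2}\binom{n-2}{k-1}^2$. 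The ``weighted AM--GM'' that is supposed to absorb this excess is never exhibited, and the closing appeal to the cross-independent Hoffman bound is a citation, not a proof. The missing idea is that for \emph{shifted} cross-intersecting $\mathcal A,\mathcal B$ with $n\ge 2k$ the families $\widetilde{\mathcal A_n},\widetilde{\mathcal B_n}\subset\binom{[n-1]}{k-1}$ are themselves cross-intersecting: if $A'\cup\{n\}\in\mathcal A$, $B'\cup\{n\}\in\mathcal B$ and $A'\cap B'=\emptyset$, then $|A'\cup B'|=2k-2\le n-2$, so some $j\in[n-1]$ avoids both, and the $(j,n)$-shift places $A'\cup\{j\}$ in $\mathcal A$ disjoint from $B'\cup\{n\}$, a contradiction. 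With that observation the fourth term is at most $\binom{n-2}{k-2}^2$ by induction, and the four bounds sum to exactly $\bigl(\binom{n-2}{k-1}+\binom{n-2}{k-2}\bigr)^2=\binom{n-1}{k-1}^2$ --- provided you also state the induction hypothesis in its two-uniformity form $|\mathcal A||\mathcal B|\le\binom{m-1}{a-1}\binom{m-1}{b-1}$ for $m\ge a+b$, since the mixed pairs $(\widetilde{\mathcal A_n},\mathcal B_0)$ and $(\mathcal A_0,\widetilde{\mathcal B_n})$ genuinely require it.

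For contrast, the paper avoids induction entirely: Hilton's lemma reduces to lexicographic initial segments, the case $|\mathcal A|\le\binom{n-2}{k-2}$ is dispatched by the direct product estimate \eqref{eq10}, and otherwise every member of both families contains $1$ or $2$, whence a regular bipartite graph (neighborhood) argument gives the stronger sum bound $|\mathcal A|+|\mathcal B|\le2\binom{n-1}{k-1}$ and AM--GM finishes. Your route, once repaired as above, is a valid alternative, but the paper's yields the sum inequality as a by-product and requires no cross-uniformity generalization.
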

Let us mention that the notion of cross-intersection is not just a natural extension of the notion of intersecting for two families, but it is also a very useful tool for proving results for \textit{one} family. As a matter of fact, it was already used in the paper of Hilton and Milner \cite{HM}. This explains the interest in two-family versions of intersection theorems (cf. e.g. \cite{B}, \cite{MT}, \cite{RV}).

The object of this paper is two-fold. First we provide a very short proof of (\ref{eq2}). Then we use the ideas of this proof and some counting based on the Kruskal-Katona Theorem \cite{Kr}, \cite{Ka} to obtain the following sharper, best possible bounds.\\

\textbf{Example 1.} Let $i$ be an integer and define
$\bb_i = \{B\in{[n]\choose k}: 1\in B\}\cup\{B\in{[n]\choose k}:1\notin B, [2,i]\subset B\}$, $\aaa_i = \{A\in{[n]\choose k}:1\in A, [2,i]\cap A\ne \emptyset\}.$
Note that $\aaa_i,\bb_i$ are cross intersecting with
$$|\aaa_i| = {n-1\choose k-1}-{n-i\choose k-1}, \ \ \ \ \ \ \ \ \ \ |\bb_i| = {n-1\choose k-1}+{n-i\choose k-i+1}.$$
The inequalities (\ref{eq3}) and (\ref{eq4}) given below show that the pair $(\aaa_i,\bb_i)$ is extremal in the corresponding range.\\

\begin{thm}\label{thm1} Let $\aaa,\bb\subset{[n]\choose k}$ be cross-intersecting, $n> 2k>0$ and suppose $|\aaa|\le {n-1\choose k-1}\le |\bb|$ and $\cap_{B\in \bb}B=\emptyset$. Then
\begin{equation}\label{eq3} |\aaa||\bb|\le \Bigl({n-1\choose k-1}+1\Bigr)\Bigl({n-1\choose k-1}-{n-k-1\choose k-1}\Bigr) \ \ \ \ \ \text{holds.}
\end{equation}
\end{thm}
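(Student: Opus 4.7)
The plan is to translate cross-intersection into a shadow-avoidance condition and apply Kruskal-Katona, invoking $\bigcap_{B\in\bb}B=\emptyset$ only at the boundary $|\bb|=\binom{n-1}{k-1}$.

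Write $M:=\binom{n-1}{k-1}$, $m:=\binom{n-k-1}{k-1}$, $b:=|\bb|$, and $\bb^c:=\{[n]\setminus B:B\in\bb\}$. Cross-intersection is equivalent to $\aaa\cap \partial^{(k)}(\bb^c)=\emptyset$, where $\partial^{(k)}$ is the lower $k$-shadow, so $|\aaa|\le\binom{n}{k}-|\partial^{(k)}(\bb^c)|$ and the goal reduces to
\[b\Bigl(\binom{n}{k}-|\partial^{(k)}(\bb^c)|\Bigr)\le(M+1)(M-m).\]
For $b=M+t$ with $t\ge 1$, Kruskal-Katona applied to the $(n-k)$-cascade of $b$ yields $|\partial^{(k)}(\bb^c)|\ge \binom{n-1}{k}+m+\Delta(t)$, where $\Delta(1)=0$, $\Delta(2)=\binom{n-k-2}{k-2}$, and $\Delta(t)$ grows with $t$ in the manner prescribed by the cascade (each new set contributes a further term $\binom{n-k-j}{k-j}\ge 0$). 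At $t=1$ the bound is tight, matching $(M+1)(M-m)$ realized by Example 1. For $t\ge 2$, the inequality $(M+t)(M-m-\Delta(t))\le(M+1)(M-m)$ is equivalent to $(t-1)(M-m)\le(M+t)\Delta(t)$, which is verified by a routine cascade computation using $n>2k$ and $k\ge 2$.

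The critical case is $b=M$, where plain Kruskal-Katona gives only $|\partial^{(k)}(\bb^c)|\ge\binom{n-1}{k}$, hence $|\aaa||\bb|\le M^2>(M+1)(M-m)$, which is insufficient. Here we invoke $\bigcap_{B\in\bb}B=\emptyset$, equivalent to $\bigcup_{F\in\bb^c}F=[n]$, excluding the Kruskal-Katona extremizer $\binom{[n]\setminus\{x\}}{n-k}$. We aim for $|\partial^{(k)}(\bb^c)|\ge\binom{n-1}{k}+m$, giving $|\aaa||\bb|\le M(M-m)<(M+1)(M-m)$. Pick any $x\in[n]$ and $F_0\in\bb^c$ with $x\in F_0$ (guaranteed by spanning); the $m=\binom{n-k-1}{k-1}$ many $k$-subsets of $F_0$ containing $x$ all lie in $\partial^{(k)}(\bb^c)\setminus\binom{[n]\setminus\{x\}}{k}$, while separately the ``away'' shadow on $\binom{[n]\setminus\{x\}}{k}$ retains the full size $\binom{n-1}{k}$ thanks to the $m$-fold multiplicity with which each $k$-subset of $[n]\setminus\{x\}$ sits inside $\binom{[n]\setminus\{x\}}{n-k}$.

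The main obstacle is this last shadow estimate when many members of $\bb^c$ contain $x$: the portion $\bb^c\cap\binom{[n]\setminus\{x\}}{n-k}$ then shrinks and its shadow may lose $k$-subsets of $[n]\setminus\{x\}$, so we must compensate using $k$-subsets of $x$-containing members of $\bb^c$ that avoid $x$. Choosing $x$ to minimize $|\{F\in\bb^c: x\in F\}|$ (which is at most $M(n-k)/n$ by double counting), or carefully balancing the two contributions via Kruskal-Katona estimates on each side of $x$, is the crux of the argument.
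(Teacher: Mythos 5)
Your reduction is sound and in fact mirrors the paper's architecture: for $|\bb|>\binom{n-1}{k-1}$ the statement becomes a purely numerical bound on lex/colex initial segments (the paper obtains it from the case $i=k+1$ of Theorem~\ref{thm2}), and for $|\bb|=\binom{n-1}{k-1}$ one needs a strengthening of Kruskal--Katona under the spanning hypothesis $\cup_{F\in\bb^c}F=[n]$, which is exactly M\"ors's inequality \eqref{eq6}. But both halves of your argument stop where the work begins. First, the inequality $(t-1)(M-m)\le (M+t)\Delta(t)$ for all $t\ge 1$ is not a ``routine cascade computation'': the increments $\Delta(t)$ can stall rather than grow (for $k=2$ one has $\Delta(2)=\Delta(3)=1$, so the inequality at $t=3$ survives only because $M-m=2$ happens to be small there), and verifying it uniformly in $t$ is precisely the content of Theorem~\ref{thm2} for $i=k+1$, which the paper proves through a multi-case analysis using Hilton's lemma, a K\"onig--Hall matching lemma (Lemma~\ref{lem7}) when $|\aaa|$ is large, and Lov\'asz's form \eqref{eq7} of Kruskal--Katona when $|\aaa|$ is small. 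Asserting this inequality without proof leaves the main technical body of the theorem unproved.

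Second, in the boundary case $|\bb|=\binom{n-1}{k-1}$ you correctly identify the target $|\partial^{(k)}(\bb^c)|\ge\binom{n-1}{k}+\binom{n-k-1}{k-1}$, but your derivation does not close: the claim that the ``away'' shadow on $\binom{[n]\setminus\{x\}}{k}$ retains full size $\binom{n-1}{k}$ fails when many members of $\bb^c$ contain $x$, and you explicitly defer the needed compensation as ``the crux of the argument'' without supplying it. This target is exactly the special case of M\"ors's theorem quoted as \eqref{eq6}, which the paper invokes as a black box; if you do not cite it, you must prove it, and the balancing argument you gesture at is nontrivial. As it stands, the proposal identifies the right two statements to establish but establishes neither.
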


\begin{thm}\label{thm2}
 Let $\mathcal A,\mathcal B\subset {[n]\choose k}$ be cross-intersecting, $n\ge 2k>0$ and suppose that $|\mathcal B|\ge {n-1\choose k-1}+{n-i\choose k-i+1}$ holds for some $3\le i\le k+1$. Then
\begin{equation}\label{eq4} |\mathcal A||\mathcal B|\le \Bigl({n-1\choose k-1}+{n-i\choose k-i+1}\Bigr)\Bigl({n-1\choose k-1}-{n-i\choose k-1}\Bigr).\end{equation}
\end{thm}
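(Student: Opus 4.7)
The plan is to reduce to the shifted case, split the families by the element $1$, and combine a Kruskal--Katona bound on the ``forbidden'' shadow with a monotonicity argument. By the usual compression (shifting) argument, we may assume both $\aaa$ and $\bb$ are shifted, which preserves their cardinalities and the cross-intersecting relation. Write $X := \binom{n-1}{k-1}$, $Y := \binom{n-i}{k-i+1}$, $Z := \binom{n-i}{k-1}$, and decompose $\aaa = \aaa_1 \sqcup \aaa_0$ according to whether $1$ lies in the set (similarly $\bb = \bb_1 \sqcup \bb_0$); put $a := |\aaa_1|$, $a' := |\aaa_0|$, $b := |\bb_1|$, $b' := |\bb_0|$. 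Since $b \le X$ while $|\bb| = b + b' \ge X + Y$, we have $b' \ge Y$.

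A standard consequence of shiftedness together with cross-intersection is that the trace $\aaa_1' := \{A \setminus \{1\} : A \in \aaa_1\} \subset \binom{[2,n]}{k-1}$ is cross-intersecting with $\bb_0 \subset \binom{[2,n]}{k}$. Passing to complements in $[2,n]$ rephrases this as: $\aaa_1'$ is disjoint from the lower shadow at level $k-1$ of the family $\bb_0^c := \{[2,n] \setminus B : B \in \bb_0\} \subset \binom{[2,n]}{n-1-k}$. Using the identity $Y = \binom{n-i}{n-1-k}$ together with $|\bb_0^c| = b' \ge Y$, the Kruskal--Katona theorem (in Lov\'asz' real-valued form) yields $|\partial^{(k-1)}(\bb_0^c)| \ge \binom{x}{k-1}$, where $x \ge n-i$ is the real number defined by $b' = \binom{x}{n-1-k}$. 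Hence $a \le X - \binom{x}{k-1}$, recovering $a \le X - Z$ exactly when $x = n-i$.

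In the principal case $a' = 0$, this produces $|\aaa| |\bb| \le (X - \binom{x}{k-1})(X + \binom{x}{n-1-k}) =: g(x)$. Observe that $g(n-i) = (X-Z)(X+Y)$ and $g(n-1) = 0$, so the theorem reduces to showing that $g$ is non-increasing on the interval $[n-i, n-1]$. My plan is to verify this by computing the discrete difference $g(x+1) - g(x)$ using Pascal's rule $\binom{x+1}{j} = \binom{x}{j} + \binom{x}{j-1}$ on both binomial factors, and checking that the resulting expression is non-positive; the constraints $n \ge 2k$ and $3 \le i \le k+1$ enter exactly here to control the cross terms between $\binom{x}{k-1}$ and $\binom{x}{n-1-k}$.

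The residual case $a' > 0$ must be handled separately: shiftedness forces $[2,k+1] \in \aaa_0$, so every $B \in \bb$ meets $[2,k+1]$. A symmetric application of Kruskal--Katona then tightens the bounds on $b$ and $b'$, and combining this with the earlier estimate on $a$ and a trivial bound on $a'$ (from cross-intersection of $\aaa_0$ with $\bb_1'$) yields a strict improvement over the extremal value. The main obstacle is the monotonicity of $g$ in the principal case, whose proof requires a somewhat delicate cascade-level comparison of binomial coefficients; the $a' > 0$ case is expected to follow by more routine bookkeeping.
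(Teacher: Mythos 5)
Your reduction of the principal case to the claim that $g(x)=\bigl(X-\binom{x}{k-1}\bigr)\bigl(X+\binom{x}{n-1-k}\bigr)$ is non-increasing on $[n-i,n-1]$ is a reduction to a \emph{false} statement, and this is the crux of your whole argument. Take $k=2$, $i=3$ and $n$ large, say $n=103$: then $X=102$, the target value is $g(n-i)=g(100)=(102-100)(102+1)=206$, and $g(x)=(102-x)\bigl(102+\binom{x}{100}\bigr)$ on $[100,102]$. Near the right endpoint the second factor climbs from about $n-1$ up to $\binom{n-1}{2}=5151$ while the first factor only decays linearly, so for instance $g(101.7)\approx 0.3\cdot(102+1670)>500>206$. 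More generally, for any fixed $k$ and $n$ large the maximum of $g$ over $[n-i,n-1]$ is attained in the interior near $x=n-1$ and is of order $\binom{n-1}{k-1}\binom{n-1}{k}\cdot\frac{k}{en\ln(n/k)}$, which dwarfs $g(n-i)$. So $g$ is not monotone, and is not even bounded by $g(n-i)$ on the interval; the discrete-difference computation you defer cannot succeed. The theorem survives only because in that regime $X-\binom{x}{k-1}$ is less than $1$ (forcing $\mathcal A=\emptyset$) or a very small integer, which via integrality caps $x$ and hence $|\mathcal B|$; exploiting this requires reparametrizing by the integer $|\mathcal A|$ instead of by $b'$ and proving a different family of inequalities, none of which appears in your write-up. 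This is exactly the role played by the case division on $|\mathcal A|$ in the paper: there the Lov\'asz/Kruskal--Katona estimate is only invoked when $|\mathcal A|\ge\binom{n-3}{k-3}$, which forces $x\le n-3$ and keeps the parameter away from the blow-up at $x=n-1$, while smaller $|\mathcal A|$ is dispatched by the crude bound $|\mathcal A||\mathcal B|\le\binom{n-3}{k-3}\binom{n}{k}$.

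Two further points. First, the case $a'>0$ is not ``routine bookkeeping'': shifting, unlike the reduction to lexicographic initial segments via Hilton's lemma used in the paper, does not let you deduce $\mathcal A\subseteq\{A:1\in A\}$ from $|\mathcal B|>\binom{n-1}{k-1}$, so this case genuinely remains open in your scheme and no argument is given for it. Second, for comparison, the paper's route is quite different in its main case: after passing to lexicographic initial segments it proves a \emph{sum} bound $|\mathcal A|+|\mathcal B|\le 2\binom{n-1}{k-1}+\binom{n-i}{k-i+1}-\binom{n-i}{k-1}$ for $|\mathcal A|\ge\binom{n-2}{k-2}+\binom{n-i}{k-i+1}$ via a K\"onig--Hall matching in a bipartite disjointness graph (Lemma~\ref{lem7}), and only then converts the sum bound into the product bound; the shadow computations are confined to the complementary ranges of $|\mathcal A|$. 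You would need to import some analogue of that case analysis before your Kruskal--Katona step can be made to work.
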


Note that plugging in $i = k+1$ into (\ref{eq4}) gives (\ref{eq3}) except for the case $|\bb| ={n-1\choose k-1}$ which we treat separately. In the proof of Theorem \ref{thm2} we give a short computation-free proof of an important special case of an inequality due to Frankl and Tokushige \cite{FT}, which is based on the K\"onig-Hall theorem \cite{Kon}, \cite{Hal}.

\section{Preliminaries}
Let us define the \textit{lexicographic} and the \textit{colex} orders on the $k$-subsets of $[n]$. We have $F\prec G$ in the lexicographic order if $\min F\setminus G<\min G\setminus F$ holds. E.g., $\{1,100\}\prec\{2,3\}$. The colex order $\col$ is defined by $F\col G$ if $\max F\setminus G<\max G\setminus F$. Thus, $\{2,3\}\col\{1,100\}$. Note that $\{1,3\}$ precedes $\{2,4\}$ in both orderings.

For $0\le m\le {n\choose k}$ let $\mathcal L^{(k)}(m)$ denote the \textit{initial segment} of $k$-sets of length $m$, i.e., the first $m$ $k$-sets in the lexicographic order. Note that $\mathcal L^{(k)}\bigl({n-1\choose k-1}\bigr) = \{F\in{[n]\choose k}: 1\in F\}.$

Similarly, $\mathcal C^{(k)}(m)$ denotes the family of first $m$ $k$-sets in the colex order. Note that  for $k\le s\le n$ one has $\mathcal C^{(k)}\bigl({s\choose k}\bigr) = {[s]\choose k}.$

Let us state the Kruskal-Katona  Theorem \cite{Kr}, \cite{Ka}, which is one of the most important results in extremal set theory. For $0\le t\le k$ define the $t$-shadow $\mathcal S^{(t)}(\mathcal F)$ of a family $\mathcal F\subset {[n]\choose k}$ by
$$\mathcal S^{(t)}(\mathcal F) = \Bigl\{S\in {[n]\choose t}: \exists F\in\mathcal F, S\subset F\Bigr\}.$$

\begin{thm*}[Kruskal-Katona \cite{Kr}, \cite{Ka}] The inequality
\begin{equation}\label{eq5} |\mathcal S^{(t)}(\mathcal F)|\ge |\mathcal S^{(t)}(\mathcal C^{(k)}(|\mathcal F|))|
\end{equation}
holds for all $\mathcal F\subset {[n]\choose k}$, $n\ge k\ge t\ge 0$.
\end{thm*}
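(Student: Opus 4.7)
The plan is to prove the Kruskal--Katona theorem by the classical compression-plus-induction method. For $1 \le i < j \le n$, introduce the shift $S_{ij}$ that replaces each $F \in \mathcal F$ satisfying $j \in F$, $i \notin F$ by $(F \setminus \{j\}) \cup \{i\}$, provided the latter is not already in $\mathcal F$. The shift preserves $|\mathcal F|$, and a short case analysis pairing each $T \in \mathcal S^{(t)}(S_{ij}\mathcal F)$ with a corresponding set in $\mathcal S^{(t)}(\mathcal F)$ shows that $|\mathcal S^{(t)}(S_{ij}\mathcal F)| \le |\mathcal S^{(t)}(\mathcal F)|$. Iterating all such shifts until stabilisation reduces the problem to \emph{shifted} families: those with $(F \setminus \{j\}) \cup \{i\} \in \mathcal F$ whenever $F \in \mathcal F$, $j \in F$, $i < j$, $i \notin F$.

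Next, induct on $n$ (the case $n = k$ being trivial). Partition a shifted $\mathcal F$ as $\mathcal F_0 \sqcup \mathcal F_1$ by whether $n \notin F$ or $n \in F$, and set $\mathcal G = \{F \setminus \{n\} : F \in \mathcal F_1\} \subset \binom{[n-1]}{k-1}$. Shiftedness forces $\mathcal G \subseteq \mathcal S^{(k-1)}(\mathcal F_0)$: for $G \in \mathcal G$ and any $i \in [n-1] \setminus G$, applying $S_{i,n}$ to $G \cup \{n\}$ yields $G \cup \{i\} \in \mathcal F_0$, whose $(k-1)$-shadow contains $G$. This containment implies the clean identity
\begin{equation*}
|\mathcal S^{(t)}(\mathcal F)| = |\mathcal S^{(t)}(\mathcal F_0)| + |\mathcal S^{(t-1)}(\mathcal G)|,
\end{equation*}
because shadow sets avoiding $n$ come entirely from $\mathcal F_0$ (those arising from $\mathcal F_1$ land in $\mathcal S^{(t)}(\mathcal G) \subseteq \mathcal S^{(t)}(\mathcal F_0)$), while shadow sets containing $n$ are in bijection with $\mathcal S^{(t-1)}(\mathcal G)$.

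The main obstacle is to close the induction. Applying the inductive hypothesis on $[n-1]$ gives the lower bounds $|\mathcal S^{(t)}(\mathcal F_0)| \ge |\mathcal S^{(t)}(\mathcal C^{(k)}(|\mathcal F_0|))|$ and $|\mathcal S^{(t-1)}(\mathcal G)| \ge |\mathcal S^{(t-1)}(\mathcal C^{(k-1)}(|\mathcal G|))|$ (with shadows taken in $[n-1]$), and one must then show that the sum matches $|\mathcal S^{(t)}(\mathcal C^{(k)}(|\mathcal F|))|$. When $|\mathcal F| \le \binom{n-1}{k}$ the target colex segment lies in $\binom{[n-1]}{k}$ and the induction applies directly. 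When $|\mathcal F| > \binom{n-1}{k}$, the colex initial segment on $[n]$ splits as $\binom{[n-1]}{k} \sqcup \{T \cup \{n\} : T \in \mathcal C^{(k-1)}(|\mathcal F| - \binom{n-1}{k})\}$, and the essential step is a monotonicity estimate: among all admissible decompositions $|\mathcal F| = a + b$ (subject to the constraints forced by $\mathcal G \subseteq \mathcal S^{(k-1)}(\mathcal F_0)$ via Kruskal--Katona on $[n-1]$), the function $a \mapsto |\mathcal S^{(t)}(\mathcal C^{(k)}(a))| + |\mathcal S^{(t-1)}(\mathcal C^{(k-1)}(|\mathcal F|-a))|$ is minimised precisely at the colex split. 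This convexity-type step, most cleanly handled through the cascade representation of $|\mathcal F|$, which tracks how incrementing $|\mathcal F|$ by one affects the colex shadow, is the delicate heart of the argument.
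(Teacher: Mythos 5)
First, note that the paper does not prove this statement: the Kruskal--Katona theorem is quoted from \cite{Kr}, \cite{Ka} and used as a black box, so there is no in-paper argument to compare yours against; your proposal has to stand on its own as a proof of the classical theorem. Your overall strategy --- compress to a shifted family, split on the element $n$, and induct --- is the standard one, and the structural claims you make are correct: the compressions preserve cardinality and do not increase the $t$-shadow, shiftedness gives $\mathcal G\subseteq \mathcal S^{(k-1)}(\mathcal F_0)$, and that containment yields the identity $|\mathcal S^{(t)}(\mathcal F)|=|\mathcal S^{(t)}(\mathcal F_0)|+|\mathcal S^{(t-1)}(\mathcal G)|$.

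The gap is the step you yourself flag as ``the delicate heart of the argument'': you never prove that the sum of the two inductive lower bounds is at least $|\mathcal S^{(t)}(\mathcal C^{(k)}(|\mathcal F|))|$. This is not a routine verification. Without a constraint tying $|\mathcal G|$ to $|\mathcal F_0|$, the numerical inequality $|\mathcal S^{(t)}(\mathcal C^{(k)}(a))|+|\mathcal S^{(t-1)}(\mathcal C^{(k-1)}(b))|\ge |\mathcal S^{(t)}(\mathcal C^{(k)}(a+b))|$ is simply false (take $a$ tiny and $b$ large), so essentially the entire content of the theorem sits in this unproved step. Moreover, the constraint you propose to extract, ``$\mathcal G\subseteq\mathcal S^{(k-1)}(\mathcal F_0)$ via Kruskal--Katona on $[n-1]$,'' points the wrong way: that containment gives $|\mathcal G|\le|\mathcal S^{(k-1)}(\mathcal F_0)|$, and to convert this into a usable relation between $|\mathcal G|$ and $|\mathcal F_0|$ you would need an \emph{upper} bound on the shadow of $\mathcal F_0$ in terms of its size, which Kruskal--Katona does not supply. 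The standard way to close the induction sidesteps this: split on the \emph{popular} element $1$ rather than on $n$, so that shiftedness gives the reverse containment $\mathcal S^{(k-1)}(\mathcal F_0)\subseteq\mathcal G_1$, where $\mathcal G_1$ is the link of $1$ and $\mathcal F_0$ is the subfamily avoiding $1$; then assuming $|\mathcal G_1|$ falls below its colex value forces $|\mathcal F_0|$ to be large, and the inductive hypothesis applied to $\mathcal F_0$ lower-bounds $|\mathcal S^{(k-1)}(\mathcal F_0)|$ enough to contradict the containment. As written, your induction does not close, so the argument is incomplete.
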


Depending on the value of $|\mathcal F|$, equality might hold in (\ref{eq5}) even if $\mathcal F$ is not isomorphic to $\mathcal C^{(k)}(|\mathcal F|)$. However, F\"uredi and Griggs \cite{FG} succeeded in determining which are the values of $|\mathcal F|$ such that $\mathcal C^{(k)}(|\mathcal F|)$ is the \textit{unique} optimal family. M\"ors \cite{M} proved a stronger inequality under the assumption that $\cup_{F\in\mathcal F} F = [n]$. We need the following special case of it.

\begin{thm*}[M\"ors, \cite{M}] Suppose that $\mathcal G\subset {[n]\choose l}$, $n\ge l>t\ge1$ and $|\mathcal G| = {n-1\choose l}$. If $\cup_{G\in\mathcal G} = [n]$ holds then
\begin{equation}\label{eq6} |\mathcal S^{(t)}(\mathcal G)|\ge {n-1\choose t}+{l-1\choose t-1}.
\end{equation}
\end{thm*}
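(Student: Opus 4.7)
The plan is to prove M\"ors's inequality by a shift-compression argument that reduces the problem to two canonical configurations. First, apply every $(i,j)$-shift with $1 \le i < j \le n-1$: since element $n$ is never moved, these shifts preserve $|\mathcal{G}|$, maintain $\bigcup_{G} G = [n]$, and do not enlarge $|\mathcal{S}^{(t)}(\mathcal{G})|$. Assume henceforth that $\mathcal{G}$ is shifted on $[n-1]$ and write $\mathcal{G} = \mathcal{G}_0 \sqcup \{F \cup \{n\} : F \in \mathcal{G}_1\}$, with $\mathcal{G}_0 \subset \binom{[n-1]}{l}$ and $\mathcal{G}_1 \subset \binom{[n-1]}{l-1}$ both shifted on $[n-1]$ and $|\mathcal{G}_0|+|\mathcal{G}_1| = \binom{n-1}{l}$; covering forces $\mathcal{G}_1 \ne \emptyset$ and then shiftedness implies $[l-1] \in \mathcal{G}_1$. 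Splitting the shadow by whether a $t$-set contains $n$ gives
\[
|\mathcal{S}^{(t)}(\mathcal{G})| = \bigl|\mathcal{S}^{(t)}(\mathcal{G}_0) \cup \mathcal{S}^{(t)}(\mathcal{G}_1)\bigr| + |\mathcal{S}^{(t-1)}(\mathcal{G}_1)|,
\]
and since $\binom{[l-1]}{t-1} \subset \mathcal{S}^{(t-1)}(\{[l-1]\})$, the second summand is already at least $\binom{l-1}{t-1}$.

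Next I would apply a secondary compression compatible with the covering: whenever $F \cup \{n\} \in \mathcal{G}$, $i \in [n-1]\setminus F$ satisfy $F \cup \{i\} \notin \mathcal{G}$ and $|\mathcal{G}_1| \ge 2$, replace $F \cup \{n\}$ by $F \cup \{i\}$. This is a standard $(i,n)$-shift, so it preserves $|\mathcal{G}|$ and does not enlarge the shadow; the proviso $|\mathcal{G}_1| \ge 2$ guarantees that another set containing $n$ remains in the family, so the covering is preserved as well. Upon stabilization one of two cases obtains. \emph{Case I:} $|\mathcal{G}_1| = 1$, whence shiftedness forces $\mathcal{G}_1 = \{[l-1]\}$ and $\mathcal{G}_0 = \binom{[n-1]}{l}\setminus\{\{n-l,\dots,n-1\}\}$. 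For $n \ge l+2$ (the case $n=l+1$ being vacuous), every $t$-subset of $[n-1]$ has $\binom{n-1-t}{l-t}\ge 2$ $l$-supersets in $[n-1]$, so removing the single set $\{n-l,\dots,n-1\}$ keeps $\mathcal{S}^{(t)}(\mathcal{G}_0) = \binom{[n-1]}{t}$, and the bound is attained with equality. \emph{Case II:} $|\mathcal{G}_1| \ge 2$ and the stabilization condition becomes $\mathcal{U} := \{F \cup \{i\}: F \in \mathcal{G}_1,\; i \in [n-1]\setminus F\} \subset \mathcal{G}_0$.

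The main obstacle is Case II, since there $\mathcal{S}^{(t)}(\mathcal{G}_0)\cup\mathcal{S}^{(t)}(\mathcal{G}_1)$ may fall strictly short of $\binom{[n-1]}{t}$ (as already happens for $\mathcal{G}_1 = \binom{[l]}{l-1}$ and $\mathcal{G}_0 = \mathcal{U}$); the surplus of $|\mathcal{S}^{(t-1)}(\mathcal{G}_1)|$ beyond $\binom{l-1}{t-1}$ must compensate exactly. The plan is to combine two Kruskal--Katona estimates: applied to the shifted $(l-1)$-family $\mathcal{G}_1$ it gives $|\mathcal{S}^{(t-1)}(\mathcal{G}_1)| \ge |\mathcal{S}^{(t-1)}(\mathcal{C}^{(l-1)}_{n-1}(|\mathcal{G}_1|))|$, an explicit increasing function of $|\mathcal{G}_1|$; while $\mathcal{U} \subset \mathcal{G}_0$ together with the lower bound on the upper shadow $|\mathcal{U}|$ (via K--K in dual form) forces $|\mathcal{S}^{(t)}(\mathcal{G}_0)|$ to stay large. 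The size identity $|\mathcal{G}_0|+|\mathcal{G}_1| = \binom{n-1}{l}$ couples the two via $|\mathcal{U}| \le |\mathcal{G}_0|$, and the desired $|X|+|Y| \ge \binom{n-1}{t} + \binom{l-1}{t-1}$ then falls out of a cascaded Pascal-type identity. The technical heart of the proof is verifying that this combined estimate is tight for every admissible $|\mathcal{G}_1|\ge 2$ -- in particular at the extremal configuration $\mathcal{G}_1 = \binom{[l]}{l-1}$, which gives equality alongside Case I.
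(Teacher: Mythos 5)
First, a point of comparison: the paper does not prove this statement at all --- it is quoted from M\"ors \cite{M} and used as a black box, so there is no in-paper proof to measure your attempt against. Judged on its own, your proposal has genuine gaps. The first one is at the very first step. The $(i,j)$-shifts with $i<j\le n-1$ preserve $|\mathcal G|$, do not increase the shadow, and preserve the fact that \emph{some} set contains $n$; but they do \emph{not} preserve the covering condition $\bigcup_{G\in\mathcal G}G=[n]$: a shift can strip an element $j\le n-1$ from every set containing it (e.g.\ $S_{13}$ turns $\{\{1,2\},\{3,4\}\}$ into $\{\{1,2\},\{1,4\}\}$, uncovering $3$). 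After your reduction you retain only the hypotheses ``$\mathcal G_0,\mathcal G_1$ shifted on $[n-1]$, $\mathcal G_1\ne\emptyset$, $|\mathcal G_0|+|\mathcal G_1|=\binom{n-1}{l}$'', and these are provably insufficient: the family $\mathcal G=\binom{[n]\setminus\{n-1\}}{l}$, i.e.\ $\mathcal G_0=\binom{[n-2]}{l}$ and $\mathcal G_1=\binom{[n-2]}{l-1}$, satisfies all of them yet has $|\mathcal S^{(t)}(\mathcal G)|=\binom{n-1}{t}$, strictly below the claimed bound. So no argument from the reduced hypotheses alone can succeed; the covering of all of $[n-1]$ must be carried through the compression, and that is precisely the delicate point.

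Second, the later steps are not carried out. The one-set-at-a-time $(i,n)$-replacement with the stopping rule $|\mathcal G_1|\ge 2$ is not a standard shift, and the claim that it does not enlarge the shadow is unjustified: the usual injection argument requires the shift to be applied to all eligible sets simultaneously, whereas a single replacement can add new $t$-sets containing $i$ while the old $t$-sets containing $n$ are retained by other members. More importantly, Case II --- which you yourself identify as the main obstacle --- is never proved: ``combine two Kruskal--Katona estimates \dots\ falls out of a cascaded Pascal-type identity \dots\ the technical heart is verifying that this combined estimate is tight'' is a plan, not an argument, and the configuration you name as extremal there ($\mathcal G_1=\binom{[l]}{l-1}$, $\mathcal G_0=\mathcal U$) does not even satisfy the size constraint $|\mathcal G_0|+|\mathcal G_1|=\binom{n-1}{l}$ except for coincidental parameter values. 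Only Case I, which reproduces the known extremal family, is actually verified. The statement is true, but as written your proposal does not establish it; a self-contained proof needs either M\"ors's original argument or a compression scheme that demonstrably preserves the full covering hypothesis.
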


Computationwise, the bounds arising from the Kruskal-Katona Theorem are not easy to handle. Lov\'asz \cite{L} found the following slightly weaker but very handy form.
\begin{thm*}[Lov\'asz, \cite{L}] If $n\ge k\ge t\ge 0$, $\mathcal F\subset{[n]\choose k}$ and $|\mathcal F| = {x\choose k}$ for a real number $x\ge k$ then
\begin{equation}\label{eq7} |\mathcal S^{(t)}(\mathcal F)|\ge {x\choose t} \ \ \ \ \ \ \ \ \ \text{holds.}
\end{equation}
\end{thm*}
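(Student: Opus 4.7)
The plan is to derive Lovász's inequality from the Kruskal--Katona Theorem \eqref{eq5} by passing to the colex initial segment and finishing with a short real-variable comparison.

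First I would apply \eqref{eq5} to reduce to the case $\mathcal{F} = \mathcal{C}^{(k)}(m)$ with $m = \binom{x}{k}$; since the colex initial segment minimizes $|\mathcal{S}^{(t)}|$ among all families of size $m$, it suffices to prove the bound for this particular family. Next I would reduce the general $t$ to the key case $t = k - 1$ and iterate: once we know $|\mathcal{S}^{(k-1)}(\mathcal{F})| \geq \binom{x}{k-1}$ whenever $|\mathcal{F}| = \binom{x}{k}$, we can write $|\mathcal{S}^{(k-1)}(\mathcal{F})| = \binom{x_1}{k-1}$ with $x_1 \geq x$ and reapply the single-level inequality one dimension lower to obtain $|\mathcal{S}^{(k-2)}(\mathcal{F})| \geq \binom{x_1}{k-2} \geq \binom{x}{k-2}$; continuing down to level $t$ gives the claim.

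For the case $t = k - 1$ I would proceed by induction on $k$, with the trivial base case $k = 1$. Put $a = \lfloor x \rfloor$, so that $\binom{a}{k} \leq m < \binom{a+1}{k}$, and write $m = \binom{a}{k} + r$ with $0 \leq r < \binom{a}{k-1}$; for $r \geq 1$ define $y \in [k - 1, a]$ by $\binom{y}{k-1} = r$. The colex initial segment splits as
$$\mathcal{C}^{(k)}(m) = \binom{[a]}{k} \sqcup \bigl\{\{a+1\} \cup T : T \in \mathcal{T}\bigr\}, \qquad \mathcal{T} := \mathcal{C}^{(k-1)}(r) \subset \binom{[a]}{k-1},$$
so $|\mathcal{S}^{(k-1)}(\mathcal{C}^{(k)}(m))| = \binom{a}{k-1} + |\mathcal{S}^{(k-2)}(\mathcal{T})|$, and the inductive hypothesis applied to $\mathcal{T}$ yields $|\mathcal{S}^{(k-2)}(\mathcal{T})| \geq \binom{y}{k-2}$. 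The problem reduces to the real-variable inequality
$$\binom{x}{k-1} \leq \binom{a}{k-1} + \binom{y}{k-2}, \qquad \text{subject to} \qquad \binom{x}{k} = \binom{a}{k} + \binom{y}{k-1}, \quad a \leq x \leq a + 1.$$
The edge case $r = 0$ is handled directly: then $x = a$ is an integer and both sides equal $\binom{a}{k-1}$.

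The main obstacle is this one-variable comparison. I would analyze $\Phi(x) := \binom{a}{k-1} + \binom{y(x)}{k-2} - \binom{x}{k-1}$ on $[a, a+1]$, verify $\Phi(a+1) = 0$ (there $y = a$ and Pascal's identity collapses both sides to $\binom{a+1}{k-1}$), and show that $\Phi$ is non-increasing on the interval. Via implicit differentiation of the constraint and the logarithmic-derivative formula $\binom{z}{j}' = \binom{z}{j}\sum_{i=0}^{j-1}(z-i)^{-1}$, monotonicity of $\Phi$ reduces to comparing two rational expressions in $x$ and $y$, to be handled using the ordering $k - 1 \leq y \leq a \leq x \leq a + 1$. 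This calculus step is the technical crux; the combinatorial heart of the argument is the cascade-style reduction above.
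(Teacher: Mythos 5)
The paper offers no proof of this statement---it is quoted directly from Lov\'asz's book---so your argument has to stand on its own. Its overall architecture is sound: reducing to the colex initial segment via \eqref{eq5}, the decomposition $\mathcal C^{(k)}(m)={[a]\choose k}\sqcup\{\{a+1\}\cup T: T\in\mathcal C^{(k-1)}(r)\}$ with the resulting identity $|\mathcal S^{(k-1)}(\mathcal C^{(k)}(m))|={a\choose k-1}+|\mathcal S^{(k-2)}(\mathcal T)|$, and the iteration of the single-level bound from $k-1$ down to $t$ (using $\mathcal S^{(t)}(\mathcal F)=\mathcal S^{(t)}(\mathcal S^{(k-1)}(\mathcal F))$ and monotonicity of ${z\choose t}$) are all correct, and they do reduce the theorem to the stated real-variable inequality.

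The gap is in the crux step: the function $\Phi$ is \emph{not} non-increasing on $[a,a+1]$, not even on the subinterval where $y\ge k-1$. Take $k=3$, $a=10$. For $r=1$ one has $y=2$, ${x\choose 3}=121$, $x\approx 10.025$, so $\Phi\approx 45+2-45.24=1.76$; for $r=3$ one has $y=3$, ${x\choose 3}=123$, $x\approx 10.074$, so $\Phi\approx 45+3-45.71=2.29$; and only at $r={10\choose 2}=45$, i.e.\ $x=11$, does $\Phi$ return to $0$. So $\Phi$ first rises and then falls, and the scheme ``verify $\Phi(a+1)=0$ and show $\Phi'\le 0$'' cannot certify $\Phi\ge 0$. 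The reason is visible in your own derivative formula: when $y$ is near $k-1$ but $x$ is near a large $a$, the implicit derivative $y'(x)$ is of order $x^{k-1}/y^{k-2}$ and dwarfs ${x\choose k-1}'$, forcing $\Phi'>0$. The target inequality ${x\choose k-1}\le{a\choose k-1}+{y\choose k-2}$ is in fact true, but it needs a different argument (e.g.\ a concavity/endpoint analysis, or a direct algebraic comparison). Alternatively, the standard proof of Lov\'asz's bound bypasses this numerical lemma altogether: one inducts on $k$ and $|\mathcal F|$ for shifted families, shows $|\{F-\{1\}:1\in F\in\mathcal F\}|\ge{x-1\choose k-1}$ by contradiction (otherwise the subfamily avoiding $1$ has size exceeding ${x-1\choose k}$ and, by induction, a $(k-1)$-shadow too large to fit), and concludes via $|\mathcal S^{(k-1)}(\mathcal F)|\ge{x-1\choose k-1}+{x-1\choose k-2}={x\choose k-1}$. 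As written, your proposal does not close.
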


Note that for $x\ge k-1$ the polynomial ${x\choose k}$ is a monotone increasing function of $x$. Thus $x$ is uniquely determined by $|\mathcal F|$ and $k$.

Hilton observed that the lexicographic order is very useful for handling cross-intersecting families.
\begin{thm*}[Hilton's Lemma \cite{Hil}] If $\mathcal A\subset{[n]\choose a}$ and $\bb\subset{[n]\choose b}$ are cross-intersecting then $\mathcal L^{(a)}(|\mathcal A|)$ and $\mathcal L^{(b)}(|\mathcal B|)$ are cross-intersecting as well.
\end{thm*}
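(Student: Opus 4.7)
The plan is to prove Hilton's Lemma by the shifting (left-compression) technique applied \emph{simultaneously} to both families. For each $1 \le i < j \le n$, recall the standard left-compression $S_{ij}$: replace each $F \in \mathcal{F}$ with $j \in F$, $i \notin F$ by $(F \setminus \{j\}) \cup \{i\}$, provided the latter is not already in $\mathcal{F}$. I would first verify two standard facts: (i) $|S_{ij}(\mathcal{F})| = |\mathcal{F}|$, and (ii) if $(\mathcal{A}, \mathcal{B})$ is cross-intersecting, then so is $(S_{ij}(\mathcal{A}), S_{ij}(\mathcal{B}))$. The verification of (ii) is a short case analysis: if some $A' \in S_{ij}(\mathcal{A})$ and $B' \in S_{ij}(\mathcal{B})$ were disjoint, then depending on whether each of $A', B'$ is an original member of its family or was produced by the shift, one traces back to an already-disjoint pair in $\mathcal{A} \times \mathcal{B}$, contradicting the hypothesis.

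Iterating the simultaneous $S_{ij}$ decreases the monovariant $\sum_{F} \sum_{x \in F} x$ summed over both families, so in finitely many steps we reach a pair $(\tilde{\mathcal{A}}, \tilde{\mathcal{B}})$ of the same sizes which is cross-intersecting and in which each family is shifted (stable under every $S_{ij}$). It then suffices to show that the lex initial segments $\mathcal{L}^{(a)}(|\tilde{\mathcal{A}}|)$ and $\mathcal{L}^{(b)}(|\tilde{\mathcal{B}}|)$ are cross-intersecting. I would prove this by induction on $n$. The base case $n \le a+b-1$ is trivial, since there are no disjoint $a$--$b$ pairs in $[n]$. For the inductive step, I partition each family by membership of the element $1$, writing $\tilde{\mathcal{A}} = \tilde{\mathcal{A}}(1) \sqcup \tilde{\mathcal{A}}(\bar 1)$ and similarly for $\tilde{\mathcal{B}}$; the cross-intersecting property induces cross-intersecting pairs on $[2,n]$: namely $(\tilde{\mathcal{A}}(\bar 1), \tilde{\mathcal{B}}(\bar 1))$, and $(\tilde{\mathcal{A}}(\bar 1), \{B \setminus \{1\} : B \in \tilde{\mathcal{B}}(1)\})$, and symmetrically. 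Applying the inductive hypothesis replaces each piece by a lex initial segment on $[2,n]$ of the same size.

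The main obstacle is the reassembly step: a shifted family is not automatically a lex initial segment (for instance $\{\{1,2\},\{1,3\},\{2,3\}\}$ is shifted but is not the lex-initial family of size $3$ in $\binom{[n]}{2}$), so gluing the inductively produced sub-initial-segments on $[2,n]$ back into a lex initial segment on $[n]$ is not automatic. My expected fix is to interpose one additional compression that maximizes $|\tilde{\mathcal{A}}(1)|$ (and $|\tilde{\mathcal{B}}(1)|$) subject to preserving sizes and cross-intersection, namely, swapping a set $A \in \tilde{\mathcal{A}}$ with $1 \notin A$ for some lex-earlier $A' \notin \tilde{\mathcal{A}}$ with $1 \in A'$ whenever this can be carried out without creating a disjoint pair with $\tilde{\mathcal{B}}$. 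After this additional pushing, the split $|\tilde{\mathcal{A}}(1)| = \min(|\tilde{\mathcal{A}}|, \binom{n-1}{a-1})$ matches exactly the corresponding split of $\mathcal{L}^{(a)}(|\tilde{\mathcal{A}}|)$ on the element $1$, and the inductively obtained lex initial segments on $[2,n]$ glue into $\mathcal{L}^{(a)}(|\tilde{\mathcal{A}}|)$ and $\mathcal{L}^{(b)}(|\tilde{\mathcal{B}}|)$ on $[n]$, completing the proof.
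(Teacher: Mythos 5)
Your first two steps are sound: the simultaneous compressions $S_{ij}$ do preserve both cardinalities and the cross-intersecting property, and the decomposition by the element $1$ does induce the cross-intersecting pairs on $[2,n]$ that you list, whose lex replacements glue back into a cross-intersecting pair on $[n]$. The gap is exactly where you located it, and your proposed fix does not close it. The glued families are lex initial segments only if $|\tilde{\mathcal A}(1)|=\min\bigl(|\tilde{\mathcal A}|,\binom{n-1}{a-1}\bigr)$ (and likewise for $\tilde{\mathcal B}$), and your ``additional pushing'' --- swapping a set avoiding $1$ for one containing $1$ whenever no disjoint pair with the \emph{other, unchanged} family is created --- can terminate long before that split is reached. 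Concretely, take $n\ge 4$, $a=b=2$, $\tilde{\mathcal A}=\tilde{\mathcal B}=\bigl\{\{1,2\},\{1,3\},\{2,3\}\bigr\}$. Both families are shifted and cross-intersecting, and $|\tilde{\mathcal A}(1)|=2$ while $\min\bigl(|\tilde{\mathcal A}|,\binom{n-1}{1}\bigr)=3$; but swapping $\{2,3\}$ out of $\tilde{\mathcal A}$ for any $\{1,j\}$ with $j\ge 4$ creates the disjoint pair $\bigl(\{1,j\},\{2,3\}\bigr)$ with $\tilde{\mathcal B}$, so no admissible swap exists. Your procedure halts, the induction returns the non-lex family $\bigl\{\{1,2\},\{1,3\},\{2,3\}\bigr\}$ unchanged, and the lemma is not established (its conclusion is of course still true here: $\mathcal L^{(2)}(3)=\{\{1,2\},\{1,3\},\{1,4\}\}$ is cross-intersecting with itself). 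Any correct repair must modify both families simultaneously, and proving that this can always be done is essentially as hard as the lemma itself; this reassembly step is where all the weight sits in shifting proofs of Kruskal--Katona-type statements.

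For comparison, the paper's proof avoids shifting entirely. It observes that $\mathcal A,\mathcal B$ are cross-intersecting iff $\mathcal A$ is disjoint from $\mathcal S^{(a)}(\bar{\mathcal B})$, where $\bar{\mathcal B}$ is the family of complements; it then notes that the complement of $\mathcal L^{(b)}(|\mathcal B|)$ is an initial segment in the reversed colex order, so its $a$-shadow is a terminal segment of the lex order, and a single application of the Kruskal--Katona inequality \eqref{eq5} shows that if the lex segments failed to be cross-intersecting then $|\mathcal A|+|\mathcal S^{(a)}(\bar{\mathcal B})|>\binom{n}{a}$, forcing $\mathcal A$ to meet $\mathcal S^{(a)}(\bar{\mathcal B})$, a contradiction. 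If you wish to salvage your route, you need to formulate and prove the reassembly step as a genuine lemma about moving both families at once, not as a greedy one-sided swap.
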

Since it appears that the proof of Hilton's lemma was never published, we give it here for completeness.
\begin{proof}We may clearly assume that $n\ge a+b$. Consider the family $\bar{\mathcal B}$ of complements of sets from $\bb$. We have $|\bar{\mathcal B}| = |\bb|$ and $\bar{\mathcal B}\subset{[n]\choose n-b}$. Since $\aaa$ and $\bb$ are cross-intersecting, then there are no  $\bar B\in \bar{\mathcal B}, A\in \mathcal A$, such that $A\subset \bar B$. In other words, $\mathcal A$ and $\mathcal B$ are cross-intersecting if and only if $\mathcal A$ and $\mathcal S^{(a)}(\bar{\mathcal B})$ are disjoint.

Next, let us define {\it the reversed colex order}. It is the colex order for $[n]$, in which the order of elements is inverted: $n$ is the first element, then $n-1$, etc. Formally, $F\prec G$ in the reversed colex order if $\min F\setminus G>\min G\setminus F.$ It is not difficult to see the following three things. First, if an $a$-set $A$ is the $i$-th in the lexicographical order, then it is $({n\choose a}-i)$-th in the reversed colex order. Second, the complement $[n]-A$ is $({n\choose a}-i)$-th in the lexicographical order and, thus, $i$-th in the reversed colex order. Finally, we note that if a family $\mathcal F\subset{[n]\choose f}$ form an initial segment in the reversed colex order, then for any $0< f'<f$ its shadow $\mathcal S^{(f')}(\mathcal F)$ also forms an initial segment in the reversed colex order.

Returning to the proof of the theorem, we note that $|\mathcal S^{(a)}(\bar{\mathcal B})|\ge |\mathcal S^{(a)}(\bar{\mathcal L_b})|,$ where $\mathcal L_b:=\mathcal L^{(b)}(|\mathcal B|),$ due to (\ref{eq5}) and the considerations in the previous paragraph, which give that $\bar{\mathcal L_b}$ in an initial segment in the reversed colex order. We only point out that we can apply \eqref{eq5} with both colex and reversed colex orders.

Assume that $\mathcal L_a:=\mathcal L^{(a)}(|\mathcal A|)$ and $\mathcal L_b$ are not cross-intersecting. Then $\mathcal L_a$ and $\mathcal S^{(a)}(\bar{\mathcal L_b})$ intersect. The first family consists of the first sets in the lexicographical order, while the second is the initial segment in the reversed colex order, and so consists of the last elements in the lexicographical order (see two paragraphs above). Therefore, we get that $|\mathcal L_a|+|\mathcal S^{(a)}(\bar{\mathcal L_b})|>{n\choose a}.$ But then $|\mathcal A|+|\mathcal S^{(a)}(\bar{\mathcal B})|>{n\choose a}$, which implies that $\mathcal A$ and $\mathcal S^{(a)}(\bar{\mathcal B})$ intersect. This means that $\mathcal A, \mathcal B$ are also not cross-intersecting, a contradiction.
\end{proof}

Looking at the complements the next statement follows.
\begin{cor*} If $\mathcal D\subset{[n]\choose d}$ and $\mathcal E\subset{[n]\choose e}$ are cross-union then $\mathcal C^{(d)}(|\mathcal D|)$ and $\mathcal C^{(e)}(|\mathcal E|)$ are cross-union as well.

\end{cor*}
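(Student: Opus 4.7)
The plan is to deduce the Corollary from Hilton's Lemma by complementing twice in $[n]$ and then correcting the resulting order with the ground-set involution $\sigma:i\mapsto n+1-i$.

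First, I would translate the cross-union hypothesis into a cross-intersection hypothesis. Setting $\mathcal D':=\{[n]\setminus D:D\in\mathcal D\}$ and $\mathcal E':=\{[n]\setminus E:E\in\mathcal E\}$, the equivalence $D\cup E\neq [n]\Leftrightarrow ([n]\setminus D)\cap ([n]\setminus E)\neq\emptyset$ makes $(\mathcal D',\mathcal E')$ a cross-intersecting pair of $(n-d)$- and $(n-e)$-sets with cardinalities equal to those of $\mathcal D$ and $\mathcal E$. Hilton's Lemma then yields cross-intersecting initial lex segments $\mathcal L^{(n-d)}(|\mathcal D|)$ and $\mathcal L^{(n-e)}(|\mathcal E|)$, and complementing each family in $[n]$ once more turns them into a cross-union pair of $d$- and $e$-sets, namely $\bigl(\mathcal L^{(n-d)}(|\mathcal D|)\bigr)^c$ and $\bigl(\mathcal L^{(n-e)}(|\mathcal E|)\bigr)^c$.

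Second, I would identify these complemented lex segments with the desired colex initial segments up to $\sigma$. Complementation reverses the lex order, because $\min(F\setminus G)=\min(G^c\setminus F^c)$; consequently $\bigl(\mathcal L^{(a)}(m)\bigr)^c$ equals the terminal segment of length $m$ of the lex order on $(n-a)$-sets. On the other hand, $\sigma$ swaps $\min$ with $\max$, and an immediate calculation shows that $\sigma(F)\prec\sigma(G)$ in lex iff $G\col F$ in colex; hence $\sigma$ carries the colex initial segment $\mathcal C^{(d)}(m)$ onto the terminal lex segment of length $m$ on $d$-sets. Matching these two descriptions gives $\sigma\bigl(\mathcal C^{(d)}(|\mathcal D|)\bigr)=\bigl(\mathcal L^{(n-d)}(|\mathcal D|)\bigr)^c$ and similarly for $\mathcal E$. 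Thus $\sigma\bigl(\mathcal C^{(d)}(|\mathcal D|)\bigr)$ and $\sigma\bigl(\mathcal C^{(e)}(|\mathcal E|)\bigr)$ are cross-union, and applying the bijection $\sigma$ one last time (which obviously preserves the cross-union property) concludes the proof.

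The only real difficulty is this bookkeeping in the second step: keeping straight how complementation and $\sigma$ interact with lex and colex. No new mathematical input is needed beyond Hilton's Lemma itself, which is why the authors merely remark that the statement follows ``by looking at complements.''
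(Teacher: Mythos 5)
Your proposal is correct and follows the same route the paper intends: complement to get a cross-intersecting pair, invoke Hilton's Lemma, and complement back, with the identification of complemented lex initial segments as (relabelled) colex initial segments being exactly the lex/reversed-colex bookkeeping already worked out inside the paper's proof of Hilton's Lemma. Your explicit use of the involution $\sigma(i)=n+1-i$ just makes precise what the paper compresses into ``Looking at the complements the next statement follows.''
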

We use the following standard notation. Given a family $\mathcal A\subset{[n]\choose k}$, the family $\mathcal A(\bar ij)$ is defined in the following way: $\mathcal A(\bar i j) = \{A-\{j\}: A\in \aaa, j\in A, i\notin A\}.$

Let us conclude this section with a simple inequality involving binomial coefficients

\begin{equation}\label{eq10}{n-i\choose k-i}{n\choose k}<{n-i+1\choose k-i+1}{n-1\choose k-1} \ \ \ \ \ \ \ \text{holds for}\ \ n\ge 2k,\ i\ge 2.\end{equation}
\begin{proof}  For $k<i$ the LHS is $0$. Suppose $k\ge i$ and divide both sides by ${n\choose k}{n-i+1\choose k-i+1}$. We obtain $\frac{k-i+1}{n-i+1}<\frac kn$, which is obviously true.
\end{proof}

\section{Proofs}
\subsection{Short proof of Pyber's theorem}
By symmetry, we suppose $|\mathcal A|\le |\mathcal B|$. First note that if $|\mathcal A|\le {n-2\choose k-2}$, then $$|\mathcal A||\mathcal B|\le {n-2\choose k-2}{n\choose k}  <{n-1\choose k-1}^2 \ \ \ \ \ \text{by (\ref{eq10}),\ \  case }i=2.$$

From now on we assume that ${n-2\choose k-2}\le |\mathcal A|\le |\mathcal B|.$ By Hilton's Lemma  we suppose that $\mathcal A = \mathcal L(|\mathcal A|)$, $\mathcal B = \mathcal L(|\mathcal B|),$ i.e., both families are initial segments in the lexicographic order.

Note that the first ${n-2\choose k-2}$ sets in the lexicographic order are all the $k$-sets that contain $1$ and $2$. Since $\mathcal A,\mathcal B$ are cross-intersecting, we infer that  all their members must contain either $1$ or $2$. We shall use this fact to prove:

\begin{prop}\label{prop1} We have \begin{equation}\label{eq22}|\mathcal A|+|\mathcal B|\le 2{n-1\choose k-1}.\end{equation}
\end{prop}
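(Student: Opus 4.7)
The plan is to leverage that both $\mathcal A$ and $\mathcal B$ are lex initial segments. Write $a=|\mathcal A|$ and $b=|\mathcal B|$, with $a \leq b$. I would first dispose of the easy cases. If $b \leq {n-1\choose k-1}$, the inequality is trivial since $a \leq b$. If instead $a \geq {n-1\choose k-1}$, then $\mathcal A$ contains every $k$-set through $1$; for any $B \in \mathcal B$ with $1 \notin B$, cross-intersection forces $B$ to meet every $(k-1)$-subset of $\{2,\ldots,n\}$, but $|\{2,\ldots,n\}\setminus B| = n-1-k \ge k-1$ (since $n \ge 2k$) produces such a $(k-1)$-subset disjoint from $B$, a contradiction. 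Hence $1 \in B$ for every $B \in \mathcal B$, giving $b \le {n-1\choose k-1}$ and reducing to the trivial case. So the only nontrivial regime is ${n-2\choose k-2} \leq a < {n-1\choose k-1} < b$.

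In this regime every $A \in \mathcal A$ contains $1$, and $\mathcal A$ splits as the ${n-2\choose k-2}$ sets containing $\{1,2\}$ together with an initial segment of sets $\{1\}\cup D$ with $D \in {\{3,\ldots,n\}\choose k-1}$. Since all members of $\mathcal B$ contain $1$ or $2$, $\mathcal B$ splits as the ${n-1\choose k-1}$ sets containing $1$ together with an initial segment of sets $\{2\}\cup C$ with $C \in {\{3,\ldots,n\}\choose k-1}$. Denote by $\mathcal D$ and $\mathcal C$ the resulting lex initial segments of $(k-1)$-subsets of $\{3,\ldots,n\}$. Cross-intersection of $\mathcal A$ and $\mathcal B$ reduces case-by-case to cross-intersection of $\mathcal D$ and $\mathcal C$: the only nontrivial pair is $A=\{1\}\cup D$, $B=\{2\}\cup C$, where $A \cap B = D \cap C$. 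The target $a+b \le 2{n-1\choose k-1}$ rewrites as $|\mathcal D|+|\mathcal C| \leq {n-2\choose k-1}$.

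To prove this reduced inequality, set $\tilde{\mathcal C} = \{\{3,\ldots,n\}\setminus C : C \in \mathcal C\}$, a family of $(n-k-1)$-subsets of $\{3,\ldots,n\}$. Cross-intersection of $\mathcal D$ and $\mathcal C$ is equivalent to $\mathcal D \cap \mathcal S^{(k-1)}(\tilde{\mathcal C}) = \emptyset$, giving $|\mathcal D|+|\mathcal S^{(k-1)}(\tilde{\mathcal C})| \leq {n-2\choose k-1}$. It thus suffices to show $|\mathcal S^{(k-1)}(\tilde{\mathcal C})| \geq |\mathcal C|$. By the Lov\'asz form of Kruskal-Katona (\ref{eq7}) applied to $\tilde{\mathcal C}$, writing $|\tilde{\mathcal C}| = {x\choose n-k-1}$ for some real $x \geq n-k-1$, we get $|\mathcal S^{(k-1)}(\tilde{\mathcal C})| \geq {x\choose k-1}$. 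The bound $|\mathcal C| \leq {n-2\choose k-1}$ forces $x \leq n-2$, and then comparing the two products in ${x\choose k-1}/{x\choose n-k-1}$ factor by factor over $n-2k$ consecutive integers—enabled precisely by $n \geq 2k$—yields ${x\choose k-1} \geq {x\choose n-k-1} = |\tilde{\mathcal C}|$, closing the argument.

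The main obstacle I expect is the final binomial inequality ${x\choose k-1} \geq {x\choose n-k-1}$ for $x \leq n-2$: this is where the hypothesis $n \geq 2k$ enters the shadow inequality in a nonobvious way, and the bookkeeping of the factor-by-factor comparison must be done carefully when $x$ is real rather than an integer.
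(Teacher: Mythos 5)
Your proof is correct, but its key step is genuinely different from the paper's. Both arguments dispose of the boundary cases the same way and use the lex structure to reduce the claim to the following: for the cross-intersecting $(k-1)$-uniform families $\mathcal D,\mathcal C$ on the ground set $\{3,\dots,n\}$ (your notation), one has $|\mathcal D|+|\mathcal C|\le{n-2\choose k-1}$, i.e.\ the overflow of $\mathcal B$ beyond the star of $1$ is at most the deficit of $\mathcal A$ inside that star. The paper proves this by a computation-free device: it forms the bipartite graph on $X_1=\{D:D\cap\{1,2\}=\{1\}\}$ and $X_2=\{D:D\cap\{1,2\}=\{2\}\}$ with disjoint pairs as edges, observes that this graph is ${n-k-1\choose k-1}$-regular, and notes that the entire neighborhood of $\mathcal B'$ lies in the part of $X_1$ missed by $\mathcal A$, so $|N(\mathcal B')|\ge|\mathcal B'|$ by regularity. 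You instead pass to complements and invoke the Lov\'asz form of Kruskal--Katona, \eqref{eq7}, reducing everything to ${x\choose k-1}\ge{x\choose n-k-1}$ for $n-k-1\le x\le n-2$; your factor-by-factor comparison of the $n-2k$ terms does establish this (each numerator factor $x-k+1-j\le n-k-1-j$ is dominated by the matching denominator factor, and all factors are positive since $x\ge n-k-1$, with the case $n=2k$ giving an empty product). So your route is sound, and it is in fact the same style of shadow-plus-binomial argument the paper deploys later in Cases 3 and 4 of Theorem \ref{thm2}; what the paper's regular-bipartite-graph argument buys at this point is that the short proof of Pyber's inequality stays entirely elementary and does not need Kruskal--Katona at all, which is the stated purpose of that section.
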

Note that (\ref{eq22}) implies (\ref{eq2}) by the inequality between arithmetic and geometric means. One can even deduce that (\ref{eq2}) is strict unless $|\mathcal A| = |\mathcal B| = {n-1\choose k-1}$ holds.
\begin{proof}[Proof of Proposition \ref{prop1}] If $|\mathcal B|\le {n-1\choose k-1}$ then (\ref{eq22}) is obvious. Therefore, we assume $|\mathcal B|>{n-1\choose k-1}.$ Note that the first ${n-1\choose k-1}$ members of ${[n]\choose k}$ are all the $k$-sets containing $1$. Since $\mathcal A, \mathcal B$ are cross-intersecting, $1\in A$ holds for all $A\in \mathcal A$.
Let $\mathcal B'$  be the family of the remaining sets in $\mathcal B$, i.e., $$\mathcal B' = \{B\in\mathcal B: 1\notin B\}.$$
Let $\mathcal C = \{C\in {[n]\choose k}: 1\in C, C\notin \mathcal A\}.$ To prove (\ref{eq22}) we need to show that $$|\mathcal C|\ge |\mathcal B'|\ \ \ \ \text{holds.}$$

Recall that  \textit{all} $k$-sets containing both $1$ and $2$ are in $\mathcal A$ and therefore all members of $\mathcal B$ contain $1$ or $2$. We infer that $ B\cap \{1,2\} = \{2\}$ for all $B\in \mathcal B'$ and $C\cap \{1,2\} = \{1\}$ for all $C\in \mathcal C$.

Let us now consider a bipartite graph $\mathcal G = (X_1,X_2,E)$, where $X_i := \bigl\{D_i\in {[n]\choose k}: D_i\cap \{1,2\} = \{i\}\bigr\}$ and two vertices $D_1$ and $D_2$ are connected by an edge if and only if $D_1\cap D_2 = \emptyset$ holds.

Note that $\mathcal G$ is regular of degree ${n-k-1\choose k-1}$, $\mathcal C\subseteq X_1,\mathcal B'\subseteq X_2$ hold. Moreover, the cross-intersecting property implies that if $D_1$ and $D_2$ are connected for some $D_2\in \mathcal B'$ then $D_1\in \mathcal C$. In other words, the full neighborhood of $\mathcal B'$ in the regular bipartite graph $\mathcal G$ is contained in $\mathcal C$. This implies $|\mathcal C|\ge |\mathcal B'|$ and concludes the proof.\end{proof}

\subsection{Proof of Theorem \ref{thm1} modulo Theorem \ref{thm2}}
Set $l = n-k $ and consider the families $\mathcal F = \aaa^c, \mathcal G = \bb^c\subset{[n]\choose l}.$ Then $\mathcal F,\mathcal G$ are cross-union. We assume $|\mathcal F|\le |\mathcal G|.$
Note that for $n = 2k = 2l$ the cross-intersecting and cross-union conditions are equivalent and simply mean that if, say, $F\in \mathcal F$ then $[n]-F\notin \mathcal G.$ Therefore, for an arbitrary $\mathcal F\subset {[n]\choose l}$ the families $\mathcal F$ and ${[n]\choose k}-\mathcal F^c$ are cross-union. Moreover, these are altogether ${2k\choose k} = 2{2k-1\choose k-1}$ sets. Consequently, in this case there are many ways to achieve equality in (\ref{eq2}).

The case $|\bb| = {n-1\choose k-1}$ of Theorem \ref{thm1} is somewhat special because replacing $\mathcal B$ by $\mathcal L^{(k)}\bigl({n-1\choose k-1}\bigr)$ would produce the family of all $k$-sets containing $1$, i.e., the family with the intersection of all its members being non-empty. Fortunately, in this case we can apply the theorem of M\"ors.

Setting $\mathcal G = \bb^c,$ by (\ref{eq6}) we have
$$|\mathcal S^{(k)}(\mathcal G)|\ge{n-1\choose k}+{n-k-1\choose k-1}, \ \ \ \ \ \ \text{yielding}$$
$$|\aaa|\le {n\choose k} -|\mathcal S^{(k)}(\mathcal G)|\le {n-1\choose k-1}-{n-k-1\choose k-1},$$
which proves strict inequality in (\ref{eq3}). From now on we may assume $|\bb|>{n-1\choose k-1}$,
and the remaining part of Theorem \ref{thm1} follows from the case $i = k+1$ of Theorem \ref{thm2}, which we prove in the next section.

\subsection{Proof of Theorem \ref{thm2}}
We assume w.l.o.g. that $|\mathcal A|\le |\mathcal B|$. For the whole proof we assume that $\mathcal A, \mathcal B$ are the first sets in the lexicographical order. We consider several cases depending on the size of $|\mathcal A|$.\\

\textbf{Case 1.} $|\mathcal A|\ge {n-2\choose k-2}+{n-i\choose k-i+1}$.

\begin{lem}\label{lem7} If $m\ge 2a$ are natural numbers, $\mathcal A',\mathcal B'\subset{[m]\choose a}$ are cross-intersecting, and for some integer $j\ge 1$ we have ${m-j\choose a-j}\le |\mathcal A'|\le |\mathcal B'|$, then $$|\mathcal A'|+|\mathcal B'|\le {m\choose a}+{m-j\choose a-j}-{m-j\choose a}.$$
\end{lem}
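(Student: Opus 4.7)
The plan is to apply Hilton's Lemma to reduce to the case where both $\mathcal{A}'$ and $\mathcal{B}'$ are initial segments of the lexicographic order on ${[m]\choose a}$, and then induct on $j$. The base $j=1$ follows at once from Proposition~\ref{prop1}: via Pascal's identity the target bound simplifies to $2{m-1\choose a-1}$, and the hypothesis ${m-1\choose a-1}\le|\mathcal{A}'|$ is within the range ${m-2\choose a-2}\le|\mathcal{A}'|$ treated there.

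For the inductive step $j\ge 2$ I would split on $|\mathcal{A}'|$. If $|\mathcal{A}'|\ge{m-j+1\choose a-j+1}={m-(j-1)\choose a-(j-1)}$, invoke the inductive hypothesis at $j-1$; a one-step Pascal expansion reduces the comparison between the $(j-1)$-bound and the $j$-bound to the inequality ${m-j\choose a-j+1}\le{m-j\choose a-1}$, which is a routine unimodality check for the row ${m-j\choose\cdot}$ (the two entries sum to $2a-j$, so their midpoint is at most $(m-j)/2$ by $m\ge 2a$).

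The essential case is ${m-j\choose a-j}\le|\mathcal{A}'|\le{m-j+1\choose a-j+1}$. As a lex initial segment of this size, $\mathcal{A}'$ contains every $a$-set $A\supset[j]$, and the excess $\mathcal{A}^*:=\mathcal{A}'\setminus\{A:[j]\subset A\}$ lies in $X:=\{A:[j-1]\subset A,\ j\notin A\}$ (since $X$ consists of exactly the next ${m-j\choose a-j+1}$ sets in the lex order after those containing $[j]$). The inclusion $\{A:[j]\subset A\}\subset\mathcal{A}'$, together with $m\ge 2a$, forces every $B\in\mathcal{B}'$ to meet $[j]$: otherwise $|[j+1,m]\setminus B|=m-a-j\ge a-j$ would allow one to pick $A\supset[j]$ with $A\setminus[j]\subset[j+1,m]\setminus B$, contradicting cross-intersection. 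Consequently $|\mathcal{B}'|\le{m\choose a}-{m-j\choose a}$, and the target reduces to $|\mathcal{A}^*|\le|\mathcal{B}^*|$ with $\mathcal{B}^*:=\{B:B\cap[j]\ne\emptyset\}\setminus\mathcal{B}'$.

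For this final inequality I would apply K\"onig--Hall. Put $Y:=\{B:[j-1]\cap B=\emptyset,\ j\in B\}$ and form the bipartite graph $H$ on $(X,Y)$ with $A\sim B$ iff $A\cap B=\emptyset$. Writing $A=[j-1]\cup A'$, $B=\{j\}\cup B'$ with $A',B'\subset[j+1,m]$ of sizes $a-j+1$ and $a-1$, disjointness of $A$ and $B$ is equivalent to $A'\cap B'=\emptyset$, so $H$ is biregular with degrees $d_X={m-a-1\choose a-1}$ and $d_Y={m-a-j+1\choose a-j+1}$. Rewriting both via ${n\choose k}={n\choose n-k}$ as binomial coefficients with lower index $m-2a$ exhibits $d_X\ge d_Y$ for $j\ge 2$. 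The standard count $|N(S)|\,d_Y\ge|S|\,d_X\ge|S|\,d_Y$ then verifies Hall's condition, producing a matching $M$ saturating $X$. For any $A\in\mathcal{A}^*\subset X$ the mate $M(A)\in Y$ meets $[j]$ and is disjoint from $A\in\mathcal{A}'$, so $M(A)\notin\mathcal{B}'$ and $M(A)\in\mathcal{B}^*$; injectivity yields $|\mathcal{A}^*|\le|\mathcal{B}^*|$, as desired. The main obstacle is the K\"onig--Hall step, namely establishing the biregularity of $H$ together with the degree inequality $d_X\ge d_Y$.
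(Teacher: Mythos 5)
Your proof is correct. The engine is the same as the paper's: a K\"onig--Hall matching in the biregular bipartite disjointness graph between the layer of $a$-sets containing $[s]$ but not $s+1$ and the layer containing $s+1$ but disjoint from $[s]$ (your $X$ and $Y$ are exactly the paper's $\mathcal P_{j-1}$ and $\mathcal Q_{j-1}$, with the same degree computation ${m-a-1\choose a-1}$ vs.\ ${m-a-j+1\choose a-j+1}$). The difference is organizational: the paper fixes $j$, takes $\mathcal A_0=\{A: 1\in A\}\setminus\{A:[j]\subset A\}$ and $\mathcal B_0=\{B: 1\notin B,\ B\cap[2,j]\ne\emptyset\}$, decomposes them into all $j-1$ layers at once, glues the layer matchings into one matching of $\mathcal A_0$ into $\mathcal B_0$, and reads off the bound as an independent-set estimate in that single graph; you instead peel off one layer per step of an induction on $j$, with the top of the range $|\mathcal A'|\ge{m-j+1\choose a-j+1}$ deferred to the inductive hypothesis (your monotonicity check ${m-j\choose a-j+1}\le{m-j\choose a-1}$ is exactly the paper's later remark that the bound in (\ref{eq4}) decreases with $i$), and you phrase the conclusion as an injection of the excess $\mathcal A^*$ into the complement $\mathcal B^*$ rather than as an independent-set bound --- two dual formulations of the same count. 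Your induction buys a slightly cleaner base case (it is literally Proposition \ref{prop1}) and avoids the paper's somewhat terse ``w.l.o.g.\ all of $\mathcal A'$ contains $1$ and $\mathcal B'\supseteq\mathcal L$'' reduction, at the cost of an extra comparison of the right-hand sides across consecutive $j$; both arguments, like the statement itself, implicitly require $j\le a$.
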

\begin{proof}  We assume that $\mathcal A', \mathcal B'$ are the first sets in the lexicographical order. For $j = 1$ the family$\mathcal A'$ contains all sets containing $\{1\}$. It implies that both families must have cardinality ${n-1\choose a-1}$, since otherwise $\mathcal B'$ contains the set $\{2,\ldots, a+1\}$, which is disjoint with $\{1,a+2,\ldots, 2a\}\in \mathcal A'$. At the same time, the right hand side of the displayed equation above is exactly $2{n-1\choose a-1}$ for $j=1$. Therefore, in what follows we may assume that $j\ge 2$. Since $|\mathcal A'|\le |\mathcal B'|,$ we may w.l.o.g. assume that all sets from $\mathcal A'$ contain $\{1\}$ and $\mathcal B'$ contains the family $\mathcal L$ of all the sets containing $\{1\}$. Since $|\mathcal A'|\ge {m-j\choose a-j},$ then $\mathcal A'$ contains the family $\mathcal A_1$ of all $a$-sets that contain $[1,j]$. Therefore, each set from $\mathcal B'$ must intersect $[1,j]$. We denote by $\mathcal B_0$ the family of all $a$-sets that do not contain $\{1\}$ and intersect $[2,j]$. By $\mathcal A_0$ we denote the family  $\mathcal L\setminus \mathcal A_1$. Note that $|\mathcal A_0| = {m-1\choose a-1}-{m-j\choose a-j}$ and $|\mathcal B_0| = {m-1\choose a} - {m-j\choose a}$.

Consider a bipartite graph $G$ with parts $\mathcal A_0,\mathcal B_0$ and with two sets joined by an edge if they are disjoint. We know that $(\mathcal B'\setminus \mathcal L)\cup(\mathcal A'\setminus \mathcal A_1)$ is an independent set in $G$, since any pair of sets from different families intersect.

We aim to show that there is a matching of $\mathcal A_0$ into $\bb_0$ in $G$. We look at the following decomposition: $\mathcal A_0 = \mathcal P_1\sqcup\ldots\sqcup \mathcal P_{j-1}$, where for any $1\le s\le j-1$ we have
$$\mathcal P_s = \{A\in \mathcal A_0:[2,s]\subset A, s+1\notin A\}.$$ Analogously, we consider the decomposition for $\mathcal B_0:$ $\mathcal B_0 = \mathcal Q_1\sqcup\ldots\sqcup \mathcal Q_{j-1}$, where for any $1\le s\le j-1$ we have
$$\mathcal Q_{s} = \{B\in \mathcal B_0: B\cap [2,s] = \emptyset, s+1\in B\}$$.

We claim that for each $s$ there is a matching of $\mathcal P_s$ into $\mathcal Q_s$. Indeed, $G|_{\mathcal P_s,\mathcal Q_s}$ is a biregular graph with $|\mathcal P_s| = {m-s-1\choose a-s}\le {m-s-1\choose a-1} = |\mathcal Q_s|$, therefore, it has a matching exhausting the smaller part by the K\"onig-Hall theorem. The inequality between two binomial coefficients holds since $m-s-1\ge (a-s)+(a-1) =2a-s-1$.

Since $\mathcal Q_s$ for different $s$ are disjoint,  combining matchings of $\mathcal P_s$ into $\mathcal Q_s$ we get a matching of $\mathcal A_0$ in $\mathcal B_0$. Thus the biggest independent set in $G$ is $\mathcal B_0$ and, therefore, $|\mathcal B'\setminus \mathcal L|+|\mathcal A'\setminus \mathcal A_1|\le |\mathcal B_0| = {m-1\choose a} - {m-j\choose a},$ yielding

$$|\mathcal A'|+|\mathcal B'|\le {m-1\choose a} - {m-j\choose a}+{m-1\choose a-1}+{m-j\choose a-j}= {m\choose a}+{m-j\choose a-j}-{m-j\choose a}. $$
The proof is complete.
\end{proof}

\textbf{Remark.} Actually Lemma \ref{lem7} was proved in a more general form in \cite{FT}. However, the proof that we presented here is shorter and more elementary.\\

To conclude the proof of the theorem in this case one has to notice the following. Recall that $|\mathcal B|\ge {n-1\choose k-1}+{n-i\choose k-i+1}$. Consider the cross-intersecting families $\mathcal B(\bar 12), \mathcal A(1\bar 2)$. We remark that $|\mathcal A|= {n-2\choose k-2}+|\mathcal A(1\bar 2)|$ and $|\mathcal B|= {n-1\choose k-1}+|\mathcal B(\bar 12)|$. Both families are subsets of ${[3,n]\choose k-1}$, moreover, we know that both $|\mathcal B(\bar1 2)|,|\mathcal A(1\bar 2)|\ge {n-i\choose k-i+1}$. Applying Lemma \ref{lem7} with $m = n-2, a= k-1$ and $j = i-2$, we get that

$$|\mathcal B(\bar1 2)|+|\mathcal A(1\bar 2)|\le {n-2\choose k-1}+{n-i\choose k-i+1}-{n-i\choose k-1}.$$
Therefore, $|\mathcal B|+|\mathcal A|\le {n-1\choose k-1}+{n-2\choose k-2}+{n-2\choose k-1}+{n-i\choose k-i+1}-{n-i\choose k-1} = 2{n-1\choose k-1}+{n-i\choose k-i+1}-{n-i\choose k-1}.$
We know that  $|\aaa| \leq  {n-1\choose k-1}-{n-i\choose k-1}$. Knowing the bound on $|\aaa|+|\bb|$ above, it follows that the product $|\aaa||\bb|$ is the biggest if $|\aaa|$ is maximum possible and $|\bb|= \max (|\aaa|+|\bb|) - \max |\aaa|$, which gives exactly  $|\mathcal B| = {n-1\choose k-1}+ {n-i\choose k-i+1}$ and $|\mathcal A| = {n-1\choose k-1}-{n-i\choose k-1}$. The proof in this case is complete.\\

\textbf{Remark.} The argument above and Lemma \ref{lem7} together show that the bound in (\ref{eq4}) actually decreases as $i$ decreases.\\

\textbf{Case 2.} $|\mathcal A|\le {n-3\choose k-3}$. In this case
$$|\mathcal A||\mathcal B|\le {n\choose k}{n-3\choose k-3}<{n-1\choose k-1}{n-2\choose k-2}<{n-1\choose k-1}\Bigl({n-1\choose k-1}-{n-i\choose k-1}\Bigr).$$
The right hand side is obviously less than the right hand side of (\ref{eq4}). The second inequality follows from (\ref{eq10}) with $i = 3$. The proof of (\ref{eq4}) in this case is complete.\\

\textbf{Case 3.} ${n-3\choose k-3}\le |\mathcal A|\le {n-2\choose k-2}$. For this case we are going to pass to the complements of sets from $\mathcal A,\mathcal B$ and to change from cross-intersecting to cross-union families. Set $l = n-k$. Note that $2l \ge n$. We may assume that both $\mathcal A,\mathcal B$ consist of the initial segments of $l$-sets in the colex order.

First we verify that when $|\mathcal A|={n-2\choose k-2} = {n-2\choose l}$, the inequality  (\ref{eq4}) holds. Indeed, then $|\mathcal B| \le {n-1\choose l}+{n-2\choose l-1}$. At the same time, the right hand side of (\ref{eq4}) is the smallest when $i = 3$ and then it is strictly bigger than $f= {n-1\choose k-1}\Bigl({n-2\choose k-2}+{n-3\choose k-2}\Bigr) = {n-1\choose l}\Bigl({n-2\choose l}+{n-3\choose l-1}\Bigr).$ Therefore, in this case

\begin{small}\begin{equation}\label{eq16}f - |\mathcal A||\mathcal B| \ge {n-1\choose l}{n-3\choose l-1}-{n-2\choose l}{n-2\choose l-1} = \Bigl(\frac{(n-1)l}{l(n-2)}-1\Bigr){n-2\choose l}{n-2\choose l-1}>0.\end{equation}
\end{small}

Next we pass to the case when $|\mathcal B| = {n-1\choose l}+{n-2\choose l-1}+{x\choose l-2}$ for some $x\ge l-2$. We remark that $x\le n-3$ since $|\mathcal A|\ge {n-3\choose l}$. In this case, by (\ref{eq7}), we have $$|\mathcal A|\le {n-2\choose l}-{x\choose n-l-2}.$$ Indeed, $\mathcal A$ contains sets from ${[n-2]\choose l}$ only, while $\mathcal B$ contains all sets from ${[n-1]\choose l}$, all sets from $\big\{\{n\}\cup F: F\in {[n-2]\choose l-1}\big\},$ as well as ${x\choose l-2}$ sets from $\big\{\{n-1,n\}\cup F: F\in{[n-2]\choose l-2}\big\}$. Let us denote the family of sets $\{B\setminus \{n-1,n\}:\{n-1,n\}\subset B\in \bb\}$ by $\mathcal B(n-1,n)$. We have $|\mathcal B(n-1,n)|={x\choose l-2}$. We think of this family and the family $\mathcal A$ as families of subsets of $[n-2]$. These families are cross-union in $[n-2]$. Therefore, $\mathcal A\cap\big\{[n-2]-S: S\in \mathcal S^{(n-l-2)}(\bb(n-1,n)) \big\}=\emptyset$, which, together with \eqref{eq7}, implies the displayed inequality above.

 To verify (\ref{eq4}) in this case it is sufficient for us to prove that
\begin{equation}\label{eq17} {x\choose l-2}{n-2\choose l}\le {x\choose n-l-2}{n-1\choose l}.\end{equation}
If we do so, then the value of the product for any $n-3\ge x\ge l-2$ is not bigger than the value of the product, calculated for the case $|\mathcal A|={n-2\choose l}$, which, in turn, is smaller than the right hand side of (\ref{eq4}).

It is easy to see that, since $l-2\ge n-l-2$, the function ${x\choose l-2}/{x\choose n-l-2}$ is a monotone increasing function. Therefore, it is sufficient to verify (\ref{eq17}) for $x = n-3$. In that case (\ref{eq17}) transforms into ${n-3\choose l-2}{n-2\choose l}\le {n-3\choose l-1}{n-1\choose l},$ which is true since $\frac{l-1}{n-2}\cdot\frac l{n-1}<\frac l{n-2}.$\\

\textbf{Case 4.} ${n-2\choose k-2}\le |\mathcal A|\le {n-2\choose k-2}+{n-i\choose k-i+1}$. Before starting the proof in this case we remark that for $i = k+1$ this case is not necessary, since it is covered by cases 1 and 3. Since for $i = 3$ the right hand side of (\ref{eq4}) is the smallest and the interval of values for $|\mathcal A|$ is the largest, we may w.l.o.g. assume that $i = 3$.

We remark that for $|\mathcal A| = {n-2\choose l}+{n-3\choose l-1}$ we have $|\mathcal B| = {n-1\choose l}+{n-3\choose l-1}$ and we obtain exactly the bound (\ref{eq4}). Now assume that
$|\mathcal B| = {n-1\choose l}+{n-3\choose l-1}+{x\choose l-2}.$ Then, again using (\ref{eq7}), we get that $|\mathcal A|\le {n-2\choose l}+{n-3\choose l-1}-{x\choose n-l-2}$. If $x\le n-4$, then ${x\choose n-l-2}\ge {x\choose l-2}$ and, therefore, the product $|\mathcal A||\mathcal B|$ is smaller than the right hand side of (\ref{eq4}).


The last remaining case is that $|\mathcal A|\le {n-2\choose l}+{n-3\choose l-1} - {n-4\choose n-l-2} = {n-2\choose l}+{n-4\choose l-1}$. In this case we have $|\mathcal A||\mathcal B|\le \Bigl({n-2\choose l}+{n-4\choose l-1}\Bigr)\Bigl({n-1\choose l}+{n-2\choose l-1}\Bigr).$ We claim that this value is less than the value of the right hand side of (\ref{eq4}) for $i = 3$:
$$\Bigl({n-2\choose l}+{n-4\choose l-1}\Bigr)\Bigl({n-1\choose l}+{n-2\choose l-1}\Bigr)\le \Bigl({n-1\choose l}+{n-3\choose l-1}\Bigr)\Bigl({n-2\choose l}+{n-3\choose l-1}\Bigr).$$

Indeed, we just check for any $j_1,j_2 \in \{1,2\}$ that the product of the $j_1$-th summand from the first bracket and the $j_2$-th summand from the second bracket in the left hand side is not bigger than the corresponding product in the right hand side. For $j_1 = j_2 = 1$ it is obvious and for $j_1 = 1, j_2 = 2$ it is shown in (\ref{eq16}). As for the rest, we have

$${n-4\choose l-1}{n-1\choose l}\le \frac{(n-3)(n-l-1)}{(n-l-2)(n-1)}{n-4\choose l-1}{n-1\choose l}= {n-3\choose l-1}{n-2\choose l}$$
since $((n-1)-2)(n-l-1)-(n-1)((n-l-1)-1) = n-1-2(n-l-1) = 2l-n+1>0,$ and
$${n-4\choose l-1}{n-2\choose l-1}\le \frac{(n-3)(n-l-1)}{(n-l-2)(n-2)}{n-4\choose l-1}{n-2\choose l-1}= {n-3\choose l-1}{n-3\choose l-1}$$
since $((n-2)-1)((n-l-2)+1)-(n-2)(n-l-2) = n-2-(n-l-2)-1 = l-1>0.$ The last inequality is due to the well-known fact that  ${m \choose b}$  is a $\log$-concave function of $m$, but to make the argument self-contained, we included the proof. The proof of the theorem is complete.

\section{Acknowledgements} We want to thank the anonymous reviewer for carefully reading the manuscript and pointing out several unclear places in the proof.

\end{document}